\title[Bergman local isometries are biholomorphisms]{Bergman local isometries are biholomorphisms}
\author[J. Yum]{Jihun Yum}
\address{Research Institute of Molecular Alchemy, Gyeongsang National University, Jinju, 52828, Republic of Korea}
\email{jihun0224@gnu.ac.kr}
\date{\today}
\newcommand{\RR}{\mathbb{R}}		
\newcommand{\CC}{\mathbb{C}} 		
\newcommand{\BB}{\mathbb{B}}        
\newcommand{\NN}{\mathbb{N}} 		
\renewcommand{\O}{\Omega}			
\newcommand{\Prob}{\mathcal{P}}     
\newcommand{\Bergman}{\mathcal{B}}  
\newcommand{\Dia}{\mathcal{D}}      
\newcommand{\Poisson}{P}            
\theoremstyle{plain}
\newtheorem{thm}{Theorem}[section]
\newtheorem{prop}[thm]{Proposition}
\newtheorem{mainthm}{Theorem}
\theoremstyle{definition}
\theoremstyle{remark}
\newtheorem{rmk}[thm]{Remark}
\theoremstyle{property}
\numberwithin{equation}{section}
\begin{document}
	
	\maketitle

\begin{abstract}
    We prove that a proper holomorphic local isometry between bounded domains with respect to the Bergman metrics is necessarily a biholomorphism. 
    The proof relies on a new method grounded in Information Geometry theories.
\end{abstract}

\section{Introduction}

Let $\Omega_1$ and $\Omega_2$ denote bounded domains in $\mathbb{C}^n$ equipped with the Bergman metrics $g_{B1}$ and $g_{B2}$, respectively. If a biholomorphism $f \colon \Omega_1 \rightarrow \Omega_2$ exists, then according to the transformation formula for the Bergman kernels (\ref{equ: transformation formula}), it is well-established that $f$ induces an isometry with respect to the Bergman metric, i.e., $f^* g_{B2} = g_{B1}$. This article explores the converse implication. 
The main theorem is the following.

\begin{mainthm} \label{thm: theorem A}
Let $\O_1$ and $\O_2$ be bounded domains in $\CC^n$. For a proper holomorphic map $f\colon \Omega_1 \rightarrow \Omega_2$,
if $f^* g_{B2} = \lambda g_{B1}$ holds on an open subset $U \subset \Omega_1$ for some constant $\lambda > 0$, then $f$ is a biholomorphism.
\end{mainthm}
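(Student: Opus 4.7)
The plan is to reduce the theorem to a reproducing-kernel vanishing statement via the proper-map decomposition of the Bergman space, and to extract that vanishing from Calabi's diastatic rigidity recast in information-geometric language. If $f$ has degree $k$, the operator $T_f(g) := (g\circ f)\det f'$ is (after rescaling by $1/\sqrt k$) an isometry from $L^2_h(\Omega_2)$ onto a closed subspace $V\subset L^2_h(\Omega_1)$, and the Bergman kernel splits as
\[ K_{\Omega_1}(z,w) \;=\; \tfrac{1}{k}\,\det f'(z)\,\overline{\det f'(w)}\,K_{\Omega_2}(f(z),f(w)) \;+\; R(z,w), \]
where $R$ is the reproducing kernel of $V^\perp$. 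It suffices to show $R\equiv 0$: if $V = L^2_h(\Omega_1)$, then the constant $1$ and each coordinate function $z_j$ factor through $f$ as $(g\circ f)\det f'$, which forces $\det f'$ to be nowhere zero and each $z_j$ to be a holomorphic function of $f(z)$, so $f$ is injective and hence a biholomorphism.

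The hypothesis $f^*g_{B2}=\lambda g_{B1}$ on $U$ says that the two K\"ahler potentials $\log K_{\Omega_2}\circ (f,f)$ and $\lambda\log K_{\Omega_1}$ differ by a pluriharmonic function on $U$. By Calabi's rigidity for diastatic K\"ahler metrics this polarizes to the identity
\[ \frac{|K_{\Omega_2}(f(z),f(w))|^2}{K_{\Omega_2}(f(z),f(z))\,K_{\Omega_2}(f(w),f(w))} \;=\; \left(\frac{|K_{\Omega_1}(z,w)|^2}{K_{\Omega_1}(z,z)\,K_{\Omega_1}(w,w)}\right)^{\!\lambda} \]
on $U\times U$. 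Both sides are the normalized affinities of the natural probability densities $p_w(z)=|K(z,w)|^2/K(w,w)$ on each domain; from the information-geometric standpoint, the Bergman metric is the Fisher information of this parametric family and Calabi's diastasis is its symmetric divergence. Passing to these intrinsic scalar invariants sidesteps the multivaluedness of $K^\lambda$ for non-integer $\lambda$.

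Substituting the decomposition into this identity and writing $A = K_{\Omega_1}-R$ (the reproducing kernel of $V$) gives
\[ \frac{|A(z,w)|^2}{A(z,z)\,A(w,w)} \;=\; \left(\frac{|K_{\Omega_1}(z,w)|^2}{K_{\Omega_1}(z,z)\,K_{\Omega_1}(w,w)}\right)^{\!\lambda} \]
on $U\times U$, with $0\le A\le K_{\Omega_1}$ as positive semidefinite kernels. The goal is to conclude from this affinity-equality, together with positive-semidefiniteness of $R$ and a Chentsov-type uniqueness of the Fisher structure, that $\lambda=1$ and $A\equiv K_{\Omega_1}$, i.e.\ $R\equiv 0$ on $U$; analytic continuation then propagates $R\equiv 0$ to all of $\Omega_1\times\Omega_1$. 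This final extraction step is the principal obstacle: deducing the rigidity conclusion from the local affinity identity---especially the identification $\lambda=1$---is precisely where the information-geometric framework is expected to be decisive, replacing the analytic handling of $K^\lambda$ by a statistical-manifold argument.
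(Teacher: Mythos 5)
Your setup is sound up to a point: the Bell-type decomposition $K_{\Omega_1}=\tfrac1k\det f'(z)\overline{\det f'(w)}K_{\Omega_2}(f(z),f(w))+R$ with $R\succeq 0$ is correct, the reduction ``$R\equiv 0\Rightarrow f$ biholomorphic'' works, and the polarized affinity identity on $U\times U$ is exactly the identity the paper derives (it is the statement that $\log B_1(z,\xi)-\lambda\log B_2(f(z),f(\xi))=\varphi(z)+\overline{\varphi(\xi)}$ after polarizing the pluriharmonic difference of potentials). But the proof stops precisely at the decisive step. You acknowledge that deducing $R\equiv 0$ (and $\lambda=1$) from the affinity identity ``is the principal obstacle'' and gesture at ``a Chentsov-type uniqueness of the Fisher structure''; that is not an argument, and Chentsov's theorem (uniqueness of monotone metrics under Markov morphisms) does not yield it. Note also that the affinity identity by itself is essentially just the polarized form of the hypothesis $f^*g_{B2}=\lambda g_{B1}$ — taking $w\to z$ only recovers the metric equality — so no amount of pointwise manipulation of that identity alone will force $R\equiv 0$; additional input about the Hilbert space $A^2(\Omega_1)$ is required. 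Moreover, aiming first at $\lambda=1$ is a detour: $\lambda=1$ is only a consequence once biholomorphy is known, and the paper never needs it along the way.

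What actually closes the gap in the paper is different in kind. The affinity identity, extended to all of $\Omega_1\times\Omega_1$ by real-analyticity, is read as a Fisher--Neyman factorization $P_1(z,\xi)=s(z,f(\xi))\,t(\xi)$ with $s(z,\zeta)=e^{-\lambda \mathcal{D}_2(f(z),\zeta)}$ and $t(\xi)=B_1(\xi,\xi)$; by the Factorization Theorem this says $f$ is a sufficient statistic for the Bergman statistical manifold, and the Cho--Yum theorem (sufficiency $\Leftrightarrow$ injectivity for proper holomorphic maps) finishes the proof. The substance of that last theorem is an explicit computation of the pushed-forward Fisher metric via the local inverses of $f$, reducing sufficiency to $\partial_\alpha\log B_1(z,f_j^{-1}(\zeta))$ being independent of the sheet $j$, and then a contradiction with the fact that $A^2(\Omega_1)$ separates points. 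That separation-of-points mechanism is the missing idea in your proposal; without it (or an equivalent operator-theoretic substitute showing $V^\perp=\{0\}$), the argument does not go through.
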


Theorem~\ref{thm: theorem A} extends renowned Lu's uniformization theorem(\cite{Lu1966onkahler}), which asserts that if a bounded domain $\Omega$ in $\mathbb{C}^n$ admits the complete Bergman metric with constant holomorphic sectional curvature, then $\Omega$ is biholomorphic to the unit ball. To elucidate this, we briefly outline a proof of Lu's theorem. For a detailed exposition, we refer readers to (Theorem 4.2.2, \cite{greene2011geometry}). Assuming the holomorphic sectional curvature is negatively constant by Myers' theorem, the universal cover $\widetilde{\Omega}$ of $\Omega$ is biholomorphic to the unit ball, and the \emph{covering map $f\colon \mathbb{B}^n \rightarrow \Omega$ constitutes a Bergman local isometry}. 
This implies that $f$ is a $\mathbb{C}$-linear map in the Bergman representative coordinates at some point $p \in \mathbb{B}^n$. In other words, the following diagram 
\begin{equation*} 
    \begin{tikzcd}
        \BB^n \arrow[swap]{d}{f} \arrow{r}{\text{rep}_p} &  \arrow{d}{A} \CC^n  \\
        \O \arrow[swap]{r}{\text{rep}_{f(p)}} & \CC^n
    \end{tikzcd}
\end{equation*}
commutes locally on a neighborhood of $p \in \BB^n$, where $A$ represents a $\CC$-linear map.
Consequently, a left inverse $g\colon \Omega \rightarrow \mathbb{B}^n$ of $f$ exists, implying injectivity of $f$. The significance of the unit ball $\mathbb{B}^n$ lies in its properties: (1) $\text{rep}_p : \mathbb{B}^n \rightarrow \mathbb{C}^n$ is $\mathbb{C}$-linear (and thus invertible) and (2) $\mathbb{B}^n$ is bounded (to apply the Riemann removable singularity theorem). Building upon this idea, S. Yoo (\cite{yoo2017differential}) extended Lu's theorem by demonstrating that for a bounded domain $\Omega \subset \mathbb{C}^n$ admitting a Bochner ``pole'', a holomorphic Bergman local isometry $f$ from $\Omega$ onto a complex manifold $M$ is necessarily a biholomorphism. 
We emphasize that Theorem~\ref{thm: theorem A} not only escapes the unit ball but also imposes no additional conditions (e.g., pseudoconvexity or Bergman completeness) apart from boundedness.

In a similar vein, M. Skwarczynski (IV.17. Theorem \cite{skwarczynski1980biholomorphic}) established that if $f$ respects the transformation rule of the Bergman kernel (\ref{equ: transformation formula}), then $f$ becomes a biholomorphism under the additional assumption that $\Omega_1$ and $\Omega_2$ are complete with respect to the Skwarczynski distance $\rho$ defined in his thesis (\cite{skwarczynski1980biholomorphic}). Furthermore, he demonstrated that $\rho(z,w) = \rho(f(z),f(w))$ implies the injectivity of $f$. Notably, these assumptions concerns the Bergman kernel, which implies the assumption of Theorem \ref{thm: theorem A} through differentiation. Hence his results follow from Theorem~\ref{thm: theorem A} immediately without any completeness assumption.

The proof of Theorem \ref{thm: theorem A} relies on a novel method grounded in Information Geometry theories, particularly the Factorization Theorem (Theorem \ref{thm: equivalent conditions for sufficiency}) and the result (Theorem \ref{thm: sufficient implies 1-1}) established by G. Cho and the author.

\subsection*{The Acknowledgments}
The author would like to express his gratitude to Prof. Kang-Tae Kim for his insightful suggestion to approach Lu's theorem from a statistical perspective.
The author extends his thanks to Prof. Kyeong-Dong Park, Sungmin Yoo, and Ye-Won Luke Cho for their valuable comments and words of encouragement.
Additionally, the author acknowledges Hoseob Seo for engaging in profound discussions.

The author is supported by Learning $\&$ Academic research institution for Master’s·PhD students, and Postdocs(LAMP) Program of the National Research Foundation of Korea(NRF) grant funded by the Ministry of Education(No. RS-2023-00301974).


\section{Preliminaries}

\subsection{Information Geometry}

    We briefly introduce the basic definitions of Information Geometry. 
    For more details, we refer the readers to \cite{amari2000methods} and \cite{ay2017information_book}.

    Let $\Xi \subset \RR^m$ be a domain and $dV$ be the standard Lebesgue measure on $\RR^m$ (more generally a set $\Xi$ can be an arbitrary measurable space, but in this paper, we focus on a domain $\Xi \subset \RR^m$ with the Borel $\sigma$-algebra).
    Let $\Prob(\Xi)$ be the space of all probability measures dominated by $dV$. 
    In general, the space $\Prob(\Xi)$ is infinite-dimensional and a subset of the Banach space of all signed measures on $\Xi$ with the total variation. From this Banach space, one can induce a smooth structure on $\Prob(\Xi)$.
    Then a triple $(\O, \Xi, \Phi)$ (or $\Phi: \O \hookrightarrow \Prob(\Xi))$, where $\O \subset \RR^n$ is domain and $\Phi: \O \rightarrow \Prob(\Xi)$ is a smooth embedding, is called a {\it statistical manifold} (or {\it statistical model}).
    We call $\O$ a {\it parameter space} and $\Xi$ a {\it sample space}, respectively. Note that the dimensions $n$ and $m$ of the two spaces are independent.

    On $\Prob(\Xi)$, there exists a natural pseudo-Riemannian metric $g_F$ called the Fisher information metric.
    By using $\Phi$ as a (global) chart with a coordinate system $(x_1, \dots, x_n)$, the Fisher information metric $g_F(x) = \sum_{\alpha,\beta=1}^n g_{\alpha\beta}(x) dx_{\alpha} \otimes dx_{\beta}$ restricted on $\Phi(\O)$ can be written as
    \begin{equation*}
		g_{\alpha\beta}(x) := \int_{\Xi} (\partial_{\alpha} \log P(x,\xi))(\partial_{\beta} \log P(x,\xi)) P(x, \xi) dV(\xi),
	\end{equation*}
    where $\Phi(x) = P(x, \cdot)dV(\cdot)$ is a probability measure on $\Xi$, and $\partial_{\alpha} := \frac{\partial}{\partial x_{\alpha}}$, $\partial_{\beta} := \frac{\partial}{\partial x_{\beta}}$ for $\alpha, \beta = 1, \dots, n$.

    One easy (but remarkable) example of a statistical manifold and the Fisher information metric is the set $\mathcal{N}$ of all Gaussian normal distributions on $\RR$. Since each element in $\mathcal{N}$ is uniquely characterized by the mean $\mu \in \RR$ and the standard deviation $\sigma > 0$, $\mathcal{N}$ can be parametrized by the upper-half space $\mathbb{H}$ in $\RR^2$, i.e., $\mathbb{H} \hookrightarrow \mathcal{N} \subset \Prob(\RR)$. Then $\mathcal{N}$ with the Fisher information metric $g_F$ becomes the hyperbolic space with negatively constant (Gaussian) curvature.

\subsection{Sufficient Statistics}

    Let $\Phi_1: \O_1 \hookrightarrow \Prob(\Xi_1)$ be a statistical manifold.
    For domains $\Xi_1 \subset \RR^{m_1}$ and $\Xi_2 \subset \RR^{m_2}$, a (Borel)-measurable function $f: \Xi_1 \rightarrow \Xi_2$ is called a {\it statistic}.
    Given a surjective statistic $f: \Xi_1 \rightarrow \Xi_2$, one can induce the natural map $\kappa: \Prob(\Xi_1) \rightarrow \Prob(\Xi_2)$ defined by the \emph{measure push-forward} of $f$, i.e., for $\mu \in \Prob(\Xi_1)$, 
    \begin{align} \label{def: measure push-forward}
        \kappa(\mu)(B) := \mu(f^{-1}(B))
    \end{align}
    for each Borel subset $B \subset \Xi_2$. 
    Then this induces a map $\kappa \circ \Phi_1: \O_1 \rightarrow \Prob(\Xi_2)$ as follows.
    \begin{equation} \label{dia: measure push-forward}
        \begin{tikzcd}
            \O_1 \arrow{r}{\Phi_1} \arrow[swap]{dr}{\kappa \circ \Phi_1 }   &  \arrow{d}{\kappa} (\Prob(\Xi_1), g_{F_1})  \\
            & (\Prob(\Xi_2), g_{F_2})
        \end{tikzcd}
    \end{equation}
    
    Now, we compare two Fisher information metrics $g_{F_1}$ and $g_{F_2}$ of $\Prob(\Xi_1)$ and $\Prob(\Xi_2)$, respectively.  
    Interestingly, the Fisher information metric always satisfies the monotone decreasing property, that is,
    \begin{align*}
        (\kappa \circ \Phi_1)^* g_{F_2} (X,X) \le \Phi_1^* g_{F_1} (X,X)
    \end{align*}
    for all $p \in \O_1$ and $X \in T_p(\O_1)$.
    When the inequality becomes equality, a statistic $f: \Xi_1 \rightarrow \Xi_2$ is called {\it sufficient} for $\Phi_1(\O_1)$. 
    In other words, sufficient statistics preserve the geometric structure of statistical manifolds.

    The following characterization theorem for sufficiency is the first ingredient for the proof of Theorem~\ref{thm: theorem A}.  
    
    \begin{thm}[cf. Theorem 2.1, \cite{amari2000methods}](Factorization Theorem)  \label{thm: equivalent conditions for sufficiency}
    The following are equivalent.
    \begin{enumerate}
            \item A statistic $f: \Xi_1 \rightarrow \Xi_2$ is sufficient for $\Phi_1(\O_1)$.
            \item   The measurable function
            \begin{equation*}
                r(x, \xi) := \frac{P(x,\xi)}{Q(x, f(\xi))}  \quad dV(\xi)-\text{a.e.}
            \end{equation*}
            does not depend on $x \in \O$, where $P(x, \xi)dV(\xi) \in \Phi_1(\O_1)$ and $\kappa(P(x, \xi)dV(\xi))(x, \zeta) := Q(x, \zeta) \kappa(dV)(\zeta)$.
            \item 
            For each $x \in \O_1$, there exist functions $s(x, \cdot) \in L^1(\Xi_2, \kappa(dV))$ and $t \in L^1(\Xi_1, dV)$ such that 
            \begin{equation*}
                P(x, \xi) = s(x, f(\xi)) t(\xi)  \quad dV(\xi)-\text{a.e.},
            \end{equation*} 
            where $P(x, \xi)dV(\xi) \in \Phi_1(\O_1)$.
    \end{enumerate}
    \end{thm}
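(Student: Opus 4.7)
The plan is to split the theorem into two logically independent equivalences: the purely measure-theoretic equivalence $(2)\Leftrightarrow(3)$, and the information-geometric equivalence $(1)\Leftrightarrow(2)$, which links the Fisher-metric definition of sufficiency with the classical factorization criterion.

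For $(2)\Rightarrow(3)$, I would simply take $s(x,\zeta):=Q(x,\zeta)$ and let $t(\xi)$ be the $x$-independent value of $r(x,\xi)$, so that $P(x,\xi)=Q(x,f(\xi))\,t(\xi)=s(x,f(\xi))\,t(\xi)$ $dV$-a.e. For the converse $(3)\Rightarrow(2)$ I would recover $Q$ by push-forward: for any Borel set $B\subset\Xi_2$,
\[
\int_B Q(x,\zeta)\,\kappa(dV)(\zeta)\;=\;\int_{f^{-1}(B)} s(x,f(\xi))\,t(\xi)\,dV(\xi),
\]
and then disintegrate $dV$ along the fibres of $f$ to rewrite the right-hand side as $\int_B s(x,\zeta)\,T(\zeta)\,\kappa(dV)(\zeta)$, where $T(\zeta)$ is the conditional mean of $t$ on $f^{-1}(\zeta)$. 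Uniqueness of Radon--Nikodym derivatives gives $Q(x,\zeta)=s(x,\zeta)T(\zeta)$, so $r(x,\xi)=t(\xi)/T(f(\xi))$ is independent of $x$.

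For $(1)\Leftrightarrow(2)$ I would first differentiate in $x_\alpha$ the identity $\int_{f^{-1}(B)} P(x,\xi)\,dV(\xi)=\int_B Q(x,\zeta)\,\kappa(dV)(\zeta)$ under the integral sign, which identifies the pull-back $\partial_\alpha\log Q(x,f(\cdot))$ with the conditional expectation $E_{P(x,\cdot)dV}\bigl[\partial_\alpha\log P(x,\cdot)\,\bigm|\,f\bigr]$. For a tangent vector $X=\sum X^\alpha\partial_\alpha$ at $x$, setting $u_X:=\sum X^\alpha\partial_\alpha\log P$ and $v_X:=\sum X^\alpha\partial_\alpha\log Q$, the Pythagorean decomposition for conditional expectations gives
\[
\Phi_1^* g_{F_1}(X,X)\,-\,(\kappa\circ\Phi_1)^* g_{F_2}(X,X)\;=\;\int_{\Xi_1}\bigl(u_X(x,\xi)-v_X(x,f(\xi))\bigr)^{\!2}\,P(x,\xi)\,dV(\xi)\;\ge\;0,
\]
which both reproduces the monotonicity recalled in the preliminaries and shows that sufficiency ($(1)$) is equivalent, via polarization in $X$, to $\partial_\alpha\log P(x,\xi)=\partial_\alpha\log Q(x,f(\xi))$ for $dV$-a.e.~$\xi$ and every $\alpha$. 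Since this latter equality is exactly $\partial_\alpha\log r(x,\xi)=0$, connectedness of $\Omega_1$ upgrades it to $(2)$.

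The main obstacle I anticipate is the measure-theoretic bookkeeping rather than the geometric idea: one must justify differentiation under the integral and, more delicately, the disintegration of $dV$ along the (possibly singular) fibres of the merely measurable map $f$, so that the function $T$ appearing in the factorization is well-defined and lies in $L^1(\Xi_2,\kappa(dV))$. A secondary subtlety is that the equalities in $(2)$ and $(3)$ hold only almost everywhere, and one must keep track of whether the exceptional sets live in $\Xi_1$, in $\Xi_2$, or in the parameter $x\in\Omega_1$; this is the step that requires the smoothness of $\Phi_1$ and dominated convergence in order to convert the pointwise-in-$x$ a.e.~statements of the Fisher-metric computation into the global factorization asserted in $(3)$.
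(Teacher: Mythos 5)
Your proposal is correct and follows essentially the same route as the paper, which does not prove the theorem itself but defers to Propositions 5.5 and 5.6 of the cited information-geometry text; those propositions establish precisely your two equivalences, $(3)\Leftrightarrow(2)$ via disintegration and conditional expectation along the fibres of $f$, and $(2)\Leftrightarrow(1)$ via the conditional-expectation Pythagorean identity for the Fisher metric. The measure-theoretic caveats you flag (differentiation under the integral sign, disintegration for a merely measurable $f$, and tracking the $x$-dependence of the null sets) are exactly the regularity issues handled in that reference, and in the paper's own application they are bypassed by the explicit covering-map formula for $\kappa$.
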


    For the proof, we refer the readers to \cite{ay2017information_book}. 
    In \cite{ay2017information_book}, Proposition 5.5 and Proposition 5.6 show that $(3) \Leftrightarrow (2)$ and $(2) \Leftrightarrow (1)$, respectively.

    \begin{rmk}
        In condition (2) of Theorem~\ref{thm: equivalent conditions for sufficiency}, for each $x \in \O_1$, automatically $Q(x, \cdot) \in L^1(\Xi_2, \kappa(dV))$ by the definition of the measure push-forward and Radon--Nikodym theorem. 
        Also, one can show that $r(x, \cdot) \in L^1(\Xi_1, dV)$ as follows.
        \begin{align*}
            \int_{\O_1} r(x, \xi) dV(\xi) 
            &= \int_{\O_1} \frac{P(x,\xi)}{Q(x, f(\xi))} dV(\xi) 
            = \int_{f(\O_1)} \frac{1}{Q(x, \zeta)} \kappa(P(x, \xi)dV(\xi)) \\
            &= \int_{f(\O_1)} \kappa(dV)
            = \int_{\O_1} dV(\xi) < \infty. 
        \end{align*}
    \end{rmk}

    \begin{rmk} \label{rmk: 1-1 implies sufficient}
        It is easy to see that an injective statistic $f$ is always sufficient from condition (3) of Theorem~\ref{thm: equivalent conditions for sufficiency}: choose $s(x, \zeta) = P(x, f^{-1}(\zeta))$ and $t(\xi) = 1$. \\
    \end{rmk}

\subsection{Bergman Geometry}

    Let $\O$ be a bounded domain in $\CC^n$ and $A^2(\O)$ be the set of all $L^2$ holomorphic functions on $\O$. Then $A^2(\O)$ is a separable Hilbert space with the inner product given by
    \begin{align*}
        \left< f, g \right> := \int_{\O} f(z) \overline{g(z)} dV(z).
    \end{align*}
    The {\it Bergman kernel} function $\Bergman : \O \times \O \rightarrow \CC$ is defined by
    $$ \Bergman(z,\xi) := \sum_{j=0} s_j(z) \overline{s_j(\xi)},  $$
    where $\{s_j\}_{j=0}$ is a complete orthonormal basis for $A^2(\O)$. 
    Note that the definition is independent of the choice of an orthonormal basis.
    The {\it Bergman metric} $g_B(z) = \sum_{\alpha,\beta=1}^n g_{\alpha \overline{\beta}}(z) dz_{\alpha} \otimes d\overline{z}_{\beta}$ on $\O$ is defined by 
    \begin{align*}
        g_{\alpha \overline{\beta}}(z)
        := \frac{\partial^2}{\partial z_{\alpha} \partial \overline{z}_{\beta}} \log \Bergman(z,z), 
    \end{align*}
    provided that $\Bergman(z,z) > 0$ on $\O$.
    For a bounded domain $\O$, it is well-known that $g_B$ is well-defined and positive-definite.
    
    If $f: \O_1 \rightarrow \O_2$ is a biholomorphism between bounded domains $\O_1, \O_2 \subset \CC^n$, the following transformation formula for the Bergman kernels holds:
    \begin{align} \label{equ: transformation formula}
        \Bergman_1(z,\xi) = J_{\CC}f(z) \cdot \Bergman_2(f(z),f(\xi)) \cdot \overline{J_{\CC}f(\xi)},
    \end{align}
    where $\Bergman_1$ and $\Bergman_2$ are the Bergman kernels of $\O_1$ and $\O_2$, respectively, and $J_{\CC}f$ is the determinant of the complex Jacobian matrix of $f$. Moreover, from (\ref{equ: transformation formula}), $f$ becomes an isometry with respect to the Bergman metric, which is one of the most important property of the Bergman metric.

\subsection{Bounded domains as statistical manifolds}

    For a bounded domain $\O_1 \subset \CC^n$, G. Cho and the author (\cite{cho2023statistical}) constructed the map $\Phi_1: \O_1 \rightarrow \Prob(\O_1)$ defined by
    \begin{align*}
        \Phi_1(z) := P_1(z,\xi)dV(\xi) := \frac{|\Bergman_1(z, \xi)|^2}{\Bergman_1(z,z)} dV(\xi).
    \end{align*}
    Then they proved that $\Phi_1: \O_1 \rightarrow \Prob(\O_1)$ is indeed a statistical manifold and the pull-back of the Fisher information metric on $\Prob(\O_1)$ is the same as the Bergman metric on $\O_1$. 
    We call this $\Phi_1: \O_1 \hookrightarrow \Prob(\O_1)$ a {\it Bergman statistical manifold}. 
    They also present interesting other results in this framework. 

    For two bounded domains $\O_1, \O_2 \subset \CC^n$, let $f: \O_1 \rightarrow \O_2$ be a proper holomorphic map. 
    Note that a proper holomorphic map $f$ is surjective (Proposition 15.1.5, \cite{rudin2012function_book}). 
    It is known by R. Remmert that $V := \{ f(z) : J_{\CC} f(z)=0 \}$ is a complex variety in $\O_2$ and
    \begin{align*} \label{f locally invertible}
        f: f^{-1}(\O_2 \setminus V) \rightarrow (\O_2 \setminus V) \text{ is an } m\text{-sheeted } \text{holomorphic covering map} 
    \end{align*} 
    for some $m \in \NN$.
    In this case, the measure push-forward $\kappa$ of $f$ defined by (\ref{def: measure push-forward}) can be explicitly written as 
    \begin{equation} \label{euq: kappa expression}
        \kappa(P_1(z, \xi) dV(\xi))(z, \zeta) = \sum_{k=1}^{m} \frac{|\Bergman_{1}(z, f^{-1}_k(\zeta) )|^2 |J_{\CC}f^{-1}_k(\zeta)|^2}{\Bergman_{1}(z,z)} dV(\zeta) 
    \end{equation}
    for all $z \in \O_1$ and $\zeta \in \O_2 \setminus V$,
    where $f^{-1}_k$ is a local inverse of $f$ and $J_{\CC}f^{-1}_k(\zeta)$ is the determinant of the complex Jacobian matrix of $f^{-1}_k$.
    For a Bergman statistical manifold, the diagram (\ref{dia: measure push-forward}) becomes
    \begin{equation*} 
        \begin{tikzcd}
            (\O_1, g_{B_1}) \arrow{r}{\Phi_1} \arrow[swap]{dr}{\kappa \circ \Phi_1 }   &  \arrow{d}{\kappa} (\Prob(\O_1), g_{F_1})  \\
            & (\Prob(\O_2), g_{F_2})
        \end{tikzcd}
    \end{equation*}

    \begin{rmk}
        The way of the factorization (3) of Theorem \ref{thm: equivalent conditions for sufficiency} is not unique. For example, in a Bergman statistical manifold, if $f: \O_1 \rightarrow \O_2$ is a biholomorphism, 
        \begin{align*}
        P_1(z, \xi) = \frac{|\Bergman_1(z, \xi)|^2}{\Bergman_1(z,z)} 
        = \frac{|\Bergman_1(z, f^{-1}(f(\xi)))|^2}{\Bergman_1(z,z)} \cdot 1
        \end{align*}
        or
        \begin{align*}
        P_1(z, \xi) = \frac{|\Bergman_1(z, \xi)|^2}{\Bergman_1(z,z)} 
        = \frac{|\Bergman_2(f(z), f(\xi))|^2 |J_{\CC}f(z)|^2}{\Bergman_1(z,z)} \cdot |J_{\CC}f(\xi)|^2.
        \end{align*}
        In the second factorization, the transformation formula for the Bergman kernel (\ref{equ: transformation formula}) is applied.
    \end{rmk}

    Although an injective statistic $f$ is sufficient (Remark~\ref{rmk: 1-1 implies sufficient}), the converse is not true in general.
    The following theorem proved by G. Cho and the author says that the converse is also true for a Bergman statistical manifold, which is our second main ingredient for the proof of Theorem~\ref{thm: theorem A}.

    \begin{thm} [Corollary 4.9, \cite{cho2023statistical}] \label{thm: sufficient implies 1-1}
        Assume that $f: \O_1 \rightarrow \O_2$ is a proper holomorphic map. 
        Then $f$ is injective if and only if $f$ is sufficient for $\Phi_1(\O_1)$.
    \end{thm}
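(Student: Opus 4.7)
\medskip

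\noindent\textbf{Plan of proof.} The forward direction (injectivity $\Rightarrow$ sufficiency) is already noted in Remark~\ref{rmk: 1-1 implies sufficient}, so the substantive content is the converse: a sufficient proper holomorphic map $f\colon \O_1 \to \O_2$ must be injective. I will argue by contradiction, assuming $f$ is sufficient but that some fiber contains two distinct points, and extract from the Factorization Theorem a strong rigidity identity on the Bergman kernel that, combined with the reproducing property, forces those two points to coincide.

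\medskip

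\noindent\textbf{Step 1 (Extracting a ratio identity from sufficiency).} Suppose $\xi_1 \neq \xi_2 \in \O_1$ satisfy $f(\xi_1)=f(\xi_2)=\zeta$. Since $V = \{f(z) : J_\CC f(z)=0\}$ is a proper complex variety and $f$ is a finite branched covering, I can slide $\xi_1,\xi_2$ into $\O_1 \setminus f^{-1}(V)$ while keeping them in a common fiber, using two different local sheets $f_k^{-1}$. Condition (2) of Theorem~\ref{thm: equivalent conditions for sufficiency} then says that $r(z,\xi) := P_1(z,\xi)/Q(z,f(\xi))$ is $dV(\xi)$-a.e.\ independent of $z$. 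Using the explicit form (\ref{euq: kappa expression}), both $P_1(z,\cdot)$ and $Q(z,f(\cdot))$ are continuous on $\O_1\setminus f^{-1}(V)$ and real-analytic in $z$, so a continuous representative $r=r(\xi)$ exists on $\O_1 \setminus f^{-1}(V)$. Since $f(\xi_1)=f(\xi_2)$, the $Q$-factors cancel in the quotient $r(\xi_1)/r(\xi_2)$ and I obtain
\begin{equation*}
    \frac{|\Bergman_1(z,\xi_1)|^2}{|\Bergman_1(z,\xi_2)|^2} \;=\; \frac{r(\xi_1)}{r(\xi_2)} \;=:\; C \qquad \text{for all } z \in \O_1,
\end{equation*}
a constant independent of $z$.

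\medskip

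\noindent\textbf{Step 2 (Promoting constant modulus to a scalar multiple of kernels).} The function $z \mapsto G(z) := \Bergman_1(z,\xi_1)/\Bergman_1(z,\xi_2)$ is holomorphic on the open set $\{z : \Bergman_1(z,\xi_2)\neq 0\}$, which is nonempty since $\Bergman_1(\xi_2,\xi_2)>0$. On this open set $|G|=\sqrt{C}$ is constant, hence $G$ is constant there by the open mapping theorem; by the identity principle this yields a global identity $\Bergman_1(z,\xi_1) = c\, \Bergman_1(z,\xi_2)$ on $\O_1$ for some $c \in \CC$ with $|c|^2=C$.

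\medskip

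\noindent\textbf{Step 3 (Reproducing property collapses the fiber).} Applying the reproducing property against an arbitrary $h \in A^2(\O_1)$ gives $\overline{h(\xi_1)} = c\,\overline{h(\xi_2)}$, i.e.\ $h(\xi_1)=\bar c\, h(\xi_2)$. Since $\O_1$ is bounded we have $1 \in A^2(\O_1)$, forcing $\bar c = 1$, so $c=1$. Then $h(\xi_1)=h(\xi_2)$ for every $h \in A^2(\O_1)$; since polynomials lie in $A^2(\O_1)$ and separate points of $\CC^n$, we conclude $\xi_1 = \xi_2$, the desired contradiction.

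\medskip

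\noindent\textbf{Main obstacle.} The only delicate point is Step~1: the Factorization Theorem only guarantees $r(z,\xi)$ is $z$-independent off a null set in $\xi$, while the ultimate argument must be evaluated at the specific points $\xi_1,\xi_2$ of a chosen fiber. The remedy is the real-analyticity/continuity afforded by the explicit Bergman-kernel formulas for $P_1$ and $Q$, which lets a.e.\ equality be upgraded to pointwise equality on the Zariski-open set $\O_1 \setminus f^{-1}(V)$, where by the Remmert structure theorem one can freely choose representatives in any fiber over $\O_2 \setminus V$.
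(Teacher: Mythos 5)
Your argument is correct, but it reaches the conclusion by a genuinely different route than the paper's (which summarizes Cho--Yum). The paper characterizes sufficiency as the equality of the first logarithmic derivatives $\partial_\alpha \log \Bergman_1(z, f_k^{-1}(\zeta))$ across all sheets, specializes at a multiple point to get $\partial_\alpha \log \Bergman_1(p,p) = \partial_\alpha \log \Bergman_1(p,q)$, and then invokes a special orthonormal basis of $A^2(\O_1)$ adapted to $p$ to show that every square-integrable holomorphic function vanishing at $p$ vanishes at $q$, contradicting point separation. You instead feed the explicit push-forward formula (\ref{euq: kappa expression}) into condition (2) of the Factorization Theorem: since $\xi_1,\xi_2$ lie over the same $\zeta$, the denominators $Q(z,f(\xi_i))$ cancel and the $z$-independence of $r$ yields the global proportionality $|\Bergman_1(z,\xi_1)|^2 = C\,|\Bergman_1(z,\xi_2)|^2$, which you upgrade to $\Bergman_1(\cdot,\xi_1) = c\,\Bergman_1(\cdot,\xi_2)$ (the complement of the zero set of $\Bergman_1(\cdot,\xi_2)$ is connected, so constant modulus forces a constant). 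The reproducing property against $h \equiv 1$ and the coordinate functions then collapses the fiber with no need for a special basis or any Fisher-metric computation. What your route buys is directness and a stronger intermediate identity (proportionality of the full kernel sections rather than equality of derivatives at one point); what it costs is the measure-theoretic upgrade from the a.e.\ statement in condition (2) to pointwise equality at the chosen fiber, which you correctly identify as the delicate step and resolve via continuity/real-analyticity on $\O_1 \setminus f^{-1}(V)$. Two small points worth making explicit in a full write-up: (i) non-injectivity of $f$ forces the sheet number $m \ge 2$ (otherwise Rudin's Theorem 15.1.9 already gives injectivity), which is what legitimizes choosing $\xi_1 \neq \xi_2$ in a common fiber over $\O_2 \setminus V$; and (ii) the identity $\Bergman_1(z,\xi_1) = c\,\Bergman_1(z,\xi_2)$ extends from the dense open set where $\Bergman_1(z,\xi_2) \neq 0$ to all of $\O_1$ by continuity before you apply the reproducing property. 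Both your proof and the paper's terminate in the same way, via the fact that $A^2(\O_1)$ separates points of a bounded domain.
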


    For the readers' convenience, we briefly summarize the proof of Theorem \ref{thm: sufficient implies 1-1}.

    They first proved that $f$ is sufficient, i.e., $\Phi_1^* g_{F_1} = (\kappa \circ \Phi_1)^* g_{F_2}$ on $\O_1$, if and only if 
    \begin{equation} \label{derivative log B equal}
       \partial_{\alpha} \log \Bergman_{1}(z, f_1^{-1}(\zeta)) 
	= \cdots  
	= \partial_{\alpha} \log \Bergman_{1}(z, f_m^{-1}(\zeta)) 
    \end{equation}
    for all $z \in \O_1$, $\zeta \in \O_2 \setminus V$ and $\alpha = 1, \cdots, n$ by calculating the Fisher information metrics using (\ref{euq: kappa expression}).

    Now suppose, for the sake of contradiction, that there exist $p \neq q \in f^{-1}(\O_2 \setminus V)$ such that $f(p)=f(q)$. Then (\ref{derivative log B equal}) implies 
    \begin{equation} \label{2.4}
        \partial_{\alpha} \log \Bergman_{1}(p,p)  
	= \partial_{\alpha} \log \Bergman_{1}(p,q). 
    \end{equation}
    
    Then, using a special orthonormal basis $\{s_j\}^{\infty}_{j=0}$ for $A^2(\O_1)$ with respect to $p$, they showed that, from (\ref{2.4}), 
    if $s(p)=0$ then $s(q)=0$ for all $s \in A^2(\O_1)$. 
    This contradicts the fact that $A^2(\O_1)$ separates points, that is,
    \begin{equation*}
        \text{for all} \ p \neq q \in \O_1, \ \exists \ s \in A^2(\O_1) \ \text{such that} \ s(p)=0 \ \text{and} \  s(q) \neq 0.
    \end{equation*}
    Therefore they concluded that $f: f^{-1}(\O_2 \setminus V) \rightarrow (\O_2 \setminus V)$ is a injective map, and the properness of $f$ (cf. Theorem 15.1.9, \cite{rudin2012function_book}) implies that in fact $f: \O_1 \rightarrow \O_2$ is injective. 

    \hfill
    $\square$


    \section{Proof of Theorem \ref{thm: theorem A}}
        Since the surjectivity of $f$ follows from that $f$ is proper and holomorphic (Proposition 15.1.5, \cite{rudin2012function_book}), we only need to show that $f$ is injective.  
        By Theorem \ref{thm: equivalent conditions for sufficiency} and Theorem \ref{thm: sufficient implies 1-1}, it is enough to show that 
        $$\Poisson_1(z,\xi) := \frac{|\Bergman_1(z,\xi)|^2}{\Bergman_1(z,z)}$$ 
        satisfies the condition $(3)$ in Theorem \ref{thm: equivalent conditions for sufficiency}.
    
        From the condition $f^* g_{B2} = \lambda g_{B1}$, we have
        $$ \partial \overline{\partial} \log \Bergman_1(z,z) 
        - \lambda \partial \overline{\partial} \log \Bergman_2(f(z),f(z)) = 0 $$
        for all $z \in \O_1$. 
        Then there exist a (simply-connected) open neighborhood $U \subset \O_1$ and a holomorphic function $\varphi$ on $U$ such that
        \begin{equation} \label{1.1}
            \log \Bergman_1(z,z) 
            - \lambda \log \Bergman_2(f(z),f(z)) = \varphi(z) + \overline{\varphi(z)}
        \end{equation}
        for all $z \in U$ and hence 
        $$ \log \Bergman_1(z,\xi) 
        - \lambda \log \Bergman_2(f(z),f(\xi)) = \varphi(z) + \overline{\varphi(\xi)}
        $$
        for all $z, \xi \in U$ (by shrinking $U$ if necessary).
        Furthermore, 
        \begin{equation} \label{1.2}
            \log \Poisson_1(z,\xi) 
            - \lambda \log \Poisson_2(f(z),f(\xi)) = \varphi(\xi) + \overline{\varphi(\xi)}
        \end{equation} 
        for all $z, \xi \in U$.
        Denote the left side of (\ref{1.1}) (after replacing the $z$ variable by $\xi$) and the left side of (\ref{1.2}) by $A(\xi)$ and $B(z,\xi)$, respectively. Then both $\exp (A(\xi))$ and $\exp (B(z,\xi))$ are well-defined on the whole domain $\O_1 \times \O_1$.
        Since they are real-analytic functions and coincide on $U \times U$, we have 
        \begin{align*}
            \Poisson_1(z, \xi) 
            &= \Poisson_2(f(z), f(\xi))^{\lambda} \frac{\Bergman_1(\xi,\xi)}{\Bergman_2(f(\xi), f(\xi))^{\lambda}} \\
            &= e^{-\lambda \Dia_2(f(z), f(\xi))} \cdot \Bergman_1(\xi,\xi)
        \end{align*}
        for all $z, \xi \in \O_1$, where $\Dia_2(w, \zeta) := \log \frac{\Bergman_2(w,w)\Bergman_2(\zeta, \zeta)}{|\Bergman_2(w,\zeta)|^2}$ is the Calabi's diastasis function(\cite{Calabi53}) for $\O_2$. 
        Hence, the condition $(3)$ in Theorem \ref{thm: equivalent conditions for sufficiency} is satisfied for $s(z, \zeta) := e^{-\lambda \Dia_2(f(z), \zeta)}$ and $t(\xi):= \Bergman_1(\xi,\xi)$
        and the proof is completed. 
        \hfill
        $\square$

\vspace{5mm}
        In contrast that $L^1$ conditions are imposed for the functions $s$ and $t$ in the condition (3) of Theorem~\ref{thm: equivalent conditions for sufficiency}, our $t(\xi) := \Bergman_1(\xi,\xi)$ is not a $L^1$ function on $\O_1$ in general.
        However, the $L^1$ conditions are not needed in our situation because we have the explicit formula (\ref{euq: kappa expression}) for the measure push-forward. We end the paper by giving a proof for $(3) \Rightarrow (2)$, which is mainly from Proposition 5.5 in \cite{ay2017information_book}.
        
        \begin{prop}
            Suppose that, for each $z \in \O_1$, there exist measurable functions $s(z, \cdot): \Xi_2 \rightarrow \RR$ and $t: \Xi_1 \rightarrow \RR$ such that
            \begin{align*}
                \Poisson_1(z, \xi) 
                = s(z, f(\xi)) t(\xi)
            \end{align*}
            almost everywhere for $\xi \in \O_1$.
            Then the function
            \begin{equation*}
                r(z, \xi) := \frac{P_1(z,\xi)}{Q(z, f(\xi))} 
            \end{equation*}
            does not depend on $z \in \O_1$, where $\kappa(P_1(z, \xi)dV(\xi))(z, \zeta) := Q(z, \zeta) \kappa(dV)(\zeta)$. 
        \end{prop}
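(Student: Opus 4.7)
The plan is to bypass the general Radon--Nikodym argument used in Proposition~5.5 of \cite{ay2017information_book} and instead exploit the explicit formula (\ref{euq: kappa expression}) for the measure push-forward $\kappa$. This is precisely why the $L^1$ hypotheses on $s$ and $t$ can be dropped here. The strategy splits into three steps: (i) compute $\kappa(dV)$ and hence $Q(z,\zeta)$ in closed form; (ii) substitute the factorization hypothesis and observe that the entire $z$-dependence of $Q(z,\zeta)$ is carried by the factor $s(z,\zeta)$; (iii) note that this factor cancels in the ratio $r=P_1/Q$.

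For step (i), applying the change-of-variables formula to the $m$-sheeted holomorphic cover $f\colon f^{-1}(\O_2\setminus V)\to\O_2\setminus V$ produces
$$\kappa(dV)(\zeta)=\Bigl(\sum_{k=1}^{m}|J_{\CC}f_k^{-1}(\zeta)|^{2}\Bigr)\,dV(\zeta)$$
on $\O_2\setminus V$, and dividing the right-hand side of (\ref{euq: kappa expression}) by this (using $P_1(z,\xi)=|\Bergman_1(z,\xi)|^2/\Bergman_1(z,z)$) yields
$$Q(z,\zeta)=\frac{\sum_{k=1}^{m}P_1(z,f_k^{-1}(\zeta))\,|J_{\CC}f_k^{-1}(\zeta)|^{2}}{\sum_{k=1}^{m}|J_{\CC}f_k^{-1}(\zeta)|^{2}}.$$
For step (ii), I would substitute $P_1(z,\xi)=s(z,f(\xi))\,t(\xi)$ into the numerator. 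The key algebraic observation is that $f(f_k^{-1}(\zeta))=\zeta$ for \emph{every} sheet $k$, so $s(z,\zeta)$ is a common factor and pulls outside the sum. Setting
$$T(\zeta):=\frac{\sum_{k=1}^{m}t(f_k^{-1}(\zeta))\,|J_{\CC}f_k^{-1}(\zeta)|^{2}}{\sum_{k=1}^{m}|J_{\CC}f_k^{-1}(\zeta)|^{2}},$$
one obtains $Q(z,\zeta)=s(z,\zeta)\,T(\zeta)$ and step (iii) is then immediate:
$$r(z,\xi)=\frac{P_1(z,\xi)}{Q(z,f(\xi))}=\frac{s(z,f(\xi))\,t(\xi)}{s(z,f(\xi))\,T(f(\xi))}=\frac{t(\xi)}{T(f(\xi))},$$
which depends on $\xi$ alone.

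The only technical matter is the bookkeeping of null sets: $V$ is a proper complex subvariety of $\O_2$ and hence has Lebesgue measure zero, and since $f$ is proper holomorphic $f^{-1}(V)$ has measure zero in $\O_1$; so each identity above holds $dV$-almost everywhere, which is exactly what the conclusion asserts. I do not anticipate any substantive obstacle here; the punchline is that the explicit formula (\ref{euq: kappa expression}) trivialises what would otherwise be a delicate Radon--Nikodym/absolute-continuity argument and thereby removes the integrability hypotheses needed in the abstract version.
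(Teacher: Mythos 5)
Your proposal is correct and follows essentially the same route as the paper's own proof: both use the explicit $m$-sheeted covering formula for $\kappa$ to write $Q(z,\zeta)=s(z,\zeta)\,T(\zeta)$ and then cancel the $s$-factor in the ratio $P_1/Q$. The only difference is your additional (welcome but routine) bookkeeping about the measure-zero sets $V$ and $f^{-1}(V)$.
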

        \begin{proof}
            From the assumption, 
            \begin{align*}
                \kappa(P_1(z, \xi) dV(\xi))(z, \zeta)
                &= \sum_{k=1}^{m} s(z,f(f^{-1}_k(\zeta))) t(f^{-1}_k(\zeta)) |J_{\CC}f^{-1}_k(\zeta)|^2 dV(\zeta) \\
                &= s(z,\zeta)  \sum_{k=1}^{m} t(f^{-1}_k(\zeta)) |J_{\CC}f^{-1}_k(\zeta)|^2 dV(\zeta)
            \end{align*}
            and
            \begin{align*}
                Q(z, \zeta) 
                := \frac{\kappa(P_1(z, \xi)dV(\xi))}{\kappa(dV)}
                = s(z,\zeta)  \frac{\sum_{k=1}^{m} t(f^{-1}_k(\zeta)) |J_{\CC}f^{-1}_k(\zeta)|^2}{\sum_{k=1}^{m} |J_{\CC}f^{-1}_k(\zeta)|^2   }.
            \end{align*}
            Therefore, $\frac{P_1(z,\xi)}{Q(z, f(\xi))}$ depends only on $\xi \in \O_1$.
        \end{proof}

	\bibliographystyle{abbrv}
	\bibliography{reference}

\end{document}